\begin{document}
\input xy
\xyoption{all}

\renewcommand{\mod}{\operatorname{mod}\nolimits}
\newcommand{\proj}{\operatorname{proj}\nolimits}
\newcommand{\rad}{\operatorname{rad}\nolimits}
\newcommand{\soc}{\operatorname{soc}\nolimits}
\newcommand{\ind}{\operatorname{ind}\nolimits}
\newcommand{\Top}{\operatorname{top}\nolimits}
\newcommand{\ann}{\operatorname{Ann}\nolimits}
\newcommand{\id}{\operatorname{id}\nolimits}
\newcommand{\Mod}{\operatorname{Mod}\nolimits}
\newcommand{\End}{\operatorname{End}\nolimits}
\newcommand{\Ob}{\operatorname{Ob}\nolimits}
\newcommand{\Ht}{\operatorname{Ht}\nolimits}
\newcommand{\cone}{\operatorname{cone}\nolimits}
\newcommand{\rep}{\operatorname{rep}\nolimits}
\newcommand{\Ext}{\operatorname{Ext}\nolimits}
\newcommand{\Hom}{\operatorname{Hom}\nolimits}
\renewcommand{\Im}{\operatorname{Im}\nolimits}
\newcommand{\Ker}{\operatorname{Ker}\nolimits}
\newcommand{\Coker}{\operatorname{Coker}\nolimits}
\renewcommand{\dim}{\operatorname{dim}\nolimits}
\newcommand{\Ab}{{\operatorname{Ab}\nolimits}}
\newcommand{\Coim}{{\operatorname{Coim}\nolimits}}
\newcommand{\pd}{\operatorname{pd}\nolimits}
\newcommand{\sdim}{\operatorname{sdim}\nolimits}
\newcommand{\add}{\operatorname{add}\nolimits}
\newcommand{\cc}{{\mathcal C}}

\newtheorem{theorem}{Theorem}[section]
\newtheorem{acknowledgement}[theorem]{Acknowledgement}
\newtheorem{algorithm}[theorem]{Algorithm}
\newtheorem{axiom}[theorem]{Axiom}
\newtheorem{case}[theorem]{Case}
\newtheorem{claim}[theorem]{Claim}
\newtheorem{conclusion}[theorem]{Conclusion}
\newtheorem{condition}[theorem]{Condition}
\newtheorem{conjecture}[theorem]{Conjecture}
\newtheorem{corollary}[theorem]{Corollary}
\newtheorem{criterion}[theorem]{Criterion}
\newtheorem{definition}[theorem]{Definition}
\newtheorem{example}[theorem]{Example}
\newtheorem{exercise}[theorem]{Exercise}
\newtheorem{lemma}[theorem]{Lemma}
\newtheorem{notation}[theorem]{Notation}
\newtheorem{problem}[theorem]{Problem}
\newtheorem{proposition}[theorem]{Proposition}
\newtheorem{remark}[theorem]{Remark}
\newtheorem{solution}[theorem]{Solution}
\newtheorem{summary}[theorem]{Summary}
\newtheorem*{thma}{Theorem}

\title[Lifting to cluster-tilting objects in 2-CY triangulated categories]
{Lifting to cluster-tilting objects in 2-Calabi-Yau triangulated
categories}

\author[Fu and Liu]{Changjian Fu and Pin Liu }
\address{ Department of Mathematics\\
Sichuan University\\
 610064 Chengdu \\
P.R.China } \email{
\begin{minipage}[t]{5cm}
flyinudream@yahoo.com.cn
\end{minipage}
}
\address{
 Department of Mathematics\\
   Sichuan University\\
  610064 Chengdu \\
   P.R.China}
\email{
\begin{minipage}[t]{5cm}
pinliu@yahoo.cn \\
\end{minipage}
}
\date{last modified on December 29, 2007}
\subjclass{18E30, 16D90}
\keywords{2-Calabi-Yau category, tilting
modules, cluster-tilting objects}

\begin{abstract} We show that a tilting module over the endomorphism
algebra of a cluster-tilting object in a 2-Calabi-Yau triangulated
category lifts to a cluster-tilting object in this 2-Calabi-Yau
triangulated category. This generalizes a recent work of D. Smith
for cluster categories.
\end{abstract}

\maketitle

\section{Introduction}
Let $k$ be an algebraically closed field and $H$ be a
finite-dimensional hereditary algebra. The associated cluster
category $\mathcal{C}_H$ was introduced and studied in \cite{BMRRT},
and also in \cite{CCS} for algebras $H$ of Dynkin type $A_n$. This
is a certain triangulated category \cite{Kel} which was invented in
order to model some ingredients in the definition of cluster
algebras introduced and studied by Fomin-Zelevinsky and
Berenstein-Fomin-Zelevinsky in a series of articles \cite{FZ1, FZ2,
BFZ, FZ3}. For this purpose, a tilting theory was developed in the
cluster category. This further led to the theory of cluster-tilted
algebras initiated in \cite{BMR}.

In \cite{GLS2}, C. Gei{\ss}, B. Leclerc and J. Schr{\"o}er have
shown that the module category of a preprojective algebra of Dynkin
type is connected to cluster algebras in a way similar to the
connection between cluster categories and cluster algebras.  The
preprojective algebra approach is particularly suited for cluster
algebras associated to algebraic group, as it is not necessary to
start with a finite-dimensional hereditary algebra. Some special
modules over a preprojective algebra, treated as cluster-tilting
objects in cluster categories, are called maximal rigid modules.

All these representation-theoretic approaches to cluster algebras
are now called "categorifying" cluster algebras: the cluster-tilting
objects play the role of clusters and their indecomposable direct
summands the one of the cluster variables. Both cluster categories
and stable module categories of preprojective algebras of Dynkin
type are Calabi-Yau triangulated category of CY-dimension 2. This
motivates the study of  2-Calabi-Yau  categories  in \cite{KR1,
BIRS, GLS3} and so on.

 At the same time, cluster-tilted algebras
have been further developed in many papers by several authors, and
revealed to have very nice properties, see for instance \cite{BMR,
KR1, S}. In \cite{S}, D. Smith gave an answer to how to identify
tilting modules over cluster-tilted algebras. He showed that a
tilting modules over a cluster-tilted algebra can be lifted to a
cluster-tilting object in the  cluster category.

In this note, we point out that this phenomenon does not depend on
cluster category but only on the 2-CY property. Namely, we prove the
following

\begin{thma}Let $\mathcal {C}$ be a Calabi-Yau triangulated category of
CY-dimension 2 with a cluster-tilting object $T$ and let $\Gamma$ be
the endomorphism algebra of $T$. Let $L$ be a tilting module over
$\Gamma$, then $L$ lifts to a cluster-tilting object in $\mathcal
{C}$.
\end{thma}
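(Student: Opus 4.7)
My plan is to exploit the Keller--Reiten functor $F := \Hom_{\mathcal{C}}(T,-) : \mathcal{C} \to \mod \Gamma$, which induces an equivalence $\mathcal{C}/\add(T[1]) \xrightarrow{\ \sim\ } \mod \Gamma$, as the standard dictionary between $\mathcal{C}$ and $\mod \Gamma$. Choose $M \in \mathcal{C}$ with $F(M) \cong L$ and no direct summand in $\add(T[1])$. Because $L$ is a tilting $\Gamma$-module, the number of pairwise non-isomorphic indecomposable summands of $L$ equals $n := |T|$, the number of non-isomorphic indecomposable summands of $T$, and consequently so does that of $M$.

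The main step is to verify that $M$ is rigid, i.e.\ $\Ext^1_{\mathcal{C}}(M,M) = 0$. Here I would take a projective resolution $0 \to P^{-d} \to \cdots \to P^0 \to L \to 0$ in $\mod \Gamma$---which has length $d \le 3$, since by Keller--Reiten the global dimension of $\Gamma$ is at most $3$---and lift it to $\mathcal{C}$. Writing each $P^{-i}$ as $F(T^i)$ for some $T^i \in \add T$, one lifts the connecting maps and splices them into a sequence of triangles with outer terms in $\add T$, with $M$ appearing as the cone at the top. Applying $\Hom_{\mathcal{C}}(-,M)$ to these triangles and gluing the resulting long exact sequences---using the $2$-Calabi--Yau isomorphisms $\Hom_{\mathcal{C}}(M,T^i[2]) \cong D\Hom_{\mathcal{C}}(T^i,M)$ to control the wrong-direction terms---one expresses $\Ext^1_{\mathcal{C}}(M,M)$ in terms of $\Ext^*_{\Gamma}(L,L)$. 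The tilting hypotheses $\Ext^i_{\Gamma}(L,L) = 0$ for $i \ge 1$ then force $\Ext^1_{\mathcal{C}}(M,M) = 0$.

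Once $M$ is rigid, I would conclude via the Iyama--Yoshino maximality principle: a basic rigid object in a $2$-CY triangulated category containing a cluster-tilting object $T$ is itself cluster-tilting as soon as it has $n$ pairwise non-isomorphic indecomposable summands. An alternative route is to lift the defining tilting exact sequence $0 \to \Gamma \to L^0 \to \cdots \to L^s \to 0$ to a chain of triangles in $\mathcal{C}$, and use it to show that any $X$ with $\Ext^1_{\mathcal{C}}(M,X) = 0$ must satisfy $\Ext^1_{\mathcal{C}}(T,X) = 0$, hence $X \in \add T$, and then conclude $X \in \add M$ by a direct comparison.

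The principal obstacle is the rigidity step: because $F$ is only an equivalence \emph{modulo} $\add T[1]$, lifting a projective resolution of $L$ introduces parasitic $T[1]$-summands at each stage, and these must be carefully tracked. Disposing of them via the $2$-Calabi--Yau duality is the technical heart of the argument.
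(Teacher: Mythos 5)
Your overall architecture (lift $L$ along $F=\Hom_{\mathcal C}(T,-)$ to an object $L^+$ with no summands in $\add ST$, prove it is rigid, then prove maximality) matches the paper's, and your maximality step --- counting summands and invoking the fact that a basic rigid object with $n=|T|$ indecomposable summands is cluster-tilting --- is a legitimate shortcut past the paper's more hands-on verification (which lifts a presentation of $FY$ via Palu's lemma and the octahedral axiom). But the rigidity step, which you correctly identify as the technical heart, has a genuine gap as sketched. If you splice the minimal $\add T$-approximation triangle $T_1\to T_0\to M\to ST_1$ (whose image is a projective resolution of $L$, using $\pd_\Gamma L\le 1$ and minimality) and apply $\Hom_{\mathcal C}(-,SM)$, you get an exact sequence
$$0\to \Ext^1_\Gamma(L,L)\to \Ext^1_{\mathcal C}(M,M)\to \ker\bigl(\Ext^1_{\mathcal C}(T_0,M)\to\Ext^1_{\mathcal C}(T_1,M)\bigr)\to 0,$$
and the ``wrong-direction'' terms $\Ext^1_{\mathcal C}(T_i,M)\cong D\Hom_{\mathcal C}(M,ST_i)$ are exactly the spaces killed by $F$; chasing them through the $2$-CY duality identifies the right-hand kernel with $D$ of a quotient of $\Hom_{\mathcal C}(M,SM)$, and the whole computation collapses to the tautology $\dim\Ext^1_{\mathcal C}(M,M)=\dim\Ext^1_{\mathcal C}(M,M)$. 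No amount of long-exact-sequence bookkeeping with $\Ext^*_\Gamma(L,L)$ forces the vanishing.

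What is actually needed (the paper's Proposition 3.2) is an interlocking use of the two hypotheses: first, the Auslander--Reiten formula $D\Ext^1_\Gamma(FN,FM)\cong\Hom_\Gamma(FM,\tau_\Gamma FN)$ together with the fact that $\tau_\Gamma$ is induced by $S$ shows that every $\alpha\colon M\to SN$ factors through $\add ST$; then, for such an $\alpha$, one forms the triangle $N\to E\to M\xrightarrow{\alpha}SN$, uses $\Ext^1_\Gamma(FM,FN)=0$ to split $0\to FN\to FE\to FM\to 0$ and write $1_M=\beta\rho+f$ with $f$ factoring through $\add ST$, and finally uses $\pd_\Gamma FM\le 1$ plus minimality of the approximation to show $\Hom_{\mathcal C}(ST,\alpha)=0$, whence $\alpha=\alpha f=0$. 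This is the idea missing from your sketch. A minor but real error on the side: the global dimension of $\Gamma$ is \emph{not} at most $3$; Keller--Reiten prove $\Gamma$ is Gorenstein of dimension at most $1$, and non-hereditary cluster-tilted algebras have infinite global dimension. This is harmless here only because a tilting module has projective dimension at most $1$ by definition, so your resolution has length $1$ anyway.
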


\vspace{0.2cm} \noindent{\bf Acknowledgments.} The authors would
like to thank David Smith for many useful comments. They are
grateful to Bernhard Keller to point out a gap in a previous version
of this note. They thank Liangang Peng for helpful discussions. They
are grateful to Idun Reiten and Bin Zhu for their interest.

\section{Preliminaries}
In this section we review some useful notations and results.
\subsection{Tilting modules} Let $k$ be an algebraically closed field and $A$ be a finite-dimensional algebra.
Let $\mod A$ be
the category of finite-dimensional right $A$-modules.
 For an $A$-module
$T$, let $\add T$ denote the full subcategory of $\mod A$ with
objects all direct summands of direct sums of copies of $T$. Then
$T$ is called a tilting module in $\mod A$ if
\begin{enumerate}
\item[-]$\pd_AT\leq 1$,
\item[-]$\Ext^1_A(T,T)=0$,
\item[-]there is an exact sequence $0\to A\to T^0\to T^1\to 0$, with
$T^0, T^1$ in $\add T$.
\end{enumerate}
This is the original definition of tilting modules from \cite{HR},
and it was proved in \cite{B} that the third axiom can be replaced
by the following:
\begin{enumerate}
\item[-]the number of indecomposable direct summands of $T$ (up to
isomorphism) is the same as the number of  simple $A$-modules.
\end{enumerate}

\subsection{Cluster categories and cluster-tilted
algebras}\label{section} Let $H$ be a hereditary algebra. The
cluster category $\mathcal{C}_H$ is the orbit category
$D^b(H)/\tau^{-1}S$, where $S$ denotes the suspension functor and
$\tau$ is the Auslander-Reiten translation in the bounded derived
category $D^b(H)$. It is shown in \cite{Kel} that $\mathcal{C}_H$ is
a triangulated category. In the cluster categories approach to
cluster algebras, a central role is played by the following
2-Calabi-Yau property (see \cite{BMRRT}). For any $X, Y$ in
$\mathcal{C}_H$,
$$D\Ext^1_{\mathcal{C}_H}(X,Y)\simeq\Ext^1_{\mathcal{C}_H}(Y,X).$$

 A cluster-tilted algebra
$\Lambda$ is the endomorphism algebra of a cluster-tilting object
$T$ in $\mathcal{C}_H$. This means that $T$ has no self-extensions,
and any direct sum $T\oplus T'$ with an indecomposable $T'$ not
occuring as a direct summand of $T$ does have self-extensions (see
more in \cite{BMRRT}). By \cite{BMR}, the functor
$\Hom_{\mathcal{C}_H}(T,-)$ induces an equivalence
$\mathcal{C}_H/\add \tau T\to \mod \Lambda$. Moreover the
cluster-tilted algebra $\Lambda$ has the same number of
non-isomorphic indecomposable modules as $H$.

Recently, D. Smith proved in \cite{S} the following.
\begin{theorem}Let $\mathcal{C}_H$ be a cluster category, $T$ be a
cluster-tilting object in $\mathcal{C}_H$ and let
$\Lambda=\End_{\mathcal{C}_H}(T)^{op}$ be the corresponding
cluster-tilted algebra. Then the tilting $\Lambda$-modules lift to
cluster-tilting objects in $\mathcal{C}_H$.
\end{theorem}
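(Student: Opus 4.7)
The plan is to use the equivalence $F = \Hom_{\mathcal{C}_H}(T,-) : \mathcal{C}_H/\add \Sigma T \xrightarrow{\sim} \mod \Lambda$ recalled in Section~2.2 to lift $L$ objectwise, and then verify rigidity and count indecomposable summands. First split $L = L^p \oplus L^{np}$ into its maximal projective direct summand and its complement, and fix $T^p \in \add T$ with $F(T^p) = L^p$. Since $\pd_\Lambda L \leq 1$, choose a projective resolution $0 \to P^1 \to P^0 \to L^{np} \to 0$ and write $P^i = F(T^i)$ with $T^i \in \add T$. Because $\Ext^1_{\mathcal{C}_H}(T,T) = 0$, morphisms $T^1 \to T^0$ in $\mathcal{C}_H$ coincide with morphisms $P^1 \to P^0$ in $\mod \Lambda$, so the differential lifts to a morphism in $\mathcal{C}_H$; complete it to a triangle $T^1 \to T^0 \to \tilde L^{np} \to \Sigma T^1$ and choose $\tilde L^{np}$ without summands in $\add \Sigma T$. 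Set $\tilde L := T^p \oplus \tilde L^{np}$. Applying $F$ to the triangle and using $F(\Sigma T) = 0$ together with $\Ext^1(T,T)=0$ yields $F(\tilde L^{np}) \cong L^{np}$, hence $F(\tilde L) \cong L$.

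The central step is $\Ext^1_{\mathcal{C}_H}(\tilde L,\tilde L) = 0$. The term $\Ext^1(T^p, T^p)$ vanishes by rigidity of $T$. For the mixed term $\Ext^1(T^p, \tilde L^{np})$, apply $\Hom(T^p,-)$ to the defining triangle: using $\Ext^1(T^p, T^i) = 0$ and the 2-Calabi-Yau isomorphism $\Ext^2(T^p, T^i) \cong D\Hom(T^i, T^p)$, this term identifies with $D\Ext^1_\Lambda(L^{np}, L^p)$, which is zero since $L^p$ is projective and $\Ext^1_\Lambda(L,L) = 0$; the opposite mixed term vanishes by 2-CY duality. For the main term, apply $\Hom(-, \tilde L^{np})$ to the triangle to obtain the exact sequence
\[
\Hom(T^0, \tilde L^{np}) \to \Hom(T^1, \tilde L^{np}) \to \Ext^1(\tilde L^{np}, \tilde L^{np}) \to \Ext^1(T^0, \tilde L^{np}) \to \Ext^1(T^1, \tilde L^{np}).
\]
Surjectivity of the first arrow is exactly the vanishing $\Ext^1_\Lambda(L^{np}, L^{np}) = 0$ given by tilting, and injectivity of the last, after 2-CY duality, becomes the surjectivity of $\Ext^1_\Lambda(L^{np}, P^1) \to \Ext^1_\Lambda(L^{np}, P^0)$, which follows from $\pd_\Lambda L^{np} \leq 1$ by applying $\Hom_\Lambda(L^{np}, -)$ to the projective resolution.

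To conclude, since $\tilde L^{np}$ has no summands in $\add \Sigma T$, the equivalence $F$ bijects the indecomposable summands of $\tilde L$ with those of $L$, so $\tilde L$ has exactly as many indecomposable summands as $\Lambda$ has simple modules, and this equals the number of indecomposable summands of $T$. By the general fact that in a $2$-Calabi-Yau triangulated category with a cluster-tilting object, any rigid object with the same number of indecomposable summands as a cluster-tilting object is itself cluster-tilting (Iyama-Yoshino), $\tilde L$ is a cluster-tilting object lifting $L$. The principal obstacle is the interlocking use of both tilting axioms in the rigidity calculation: $\Ext^1_\Lambda(L,L) = 0$ supplies the $\Hom$-side vanishing in the long exact sequence, while $\pd_\Lambda L \leq 1$ supplies the complementary $\Ext^1$-side vanishing after dualization, and it is precisely the 2-Calabi-Yau property that makes these two conditions interchangeable, which is why the argument goes through in any 2-CY triangulated category rather than relying on the cluster-category structure.
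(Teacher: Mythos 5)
Your argument is essentially correct, but it follows a genuinely different route from the paper's. The paper does not construct the lift from a projective presentation; it simply takes the preimage $L^{+}$ of $L$ under the dense functor $F=\Hom_{\mathcal C}(T,-)$, proves rigidity of $L^{+}$ via a separate ``crucial proposition'' (Auslander--Reiten formula plus a careful analysis showing that every $M\to SN$ factoring through $\add ST$ is actually zero), and then verifies the \emph{second cluster-tilting axiom directly}: given an indecomposable $Y$ with $\Hom(L^{+},SY)=0$, it first rules out $Y\in\add ST$ using the exact sequence $0\to\Gamma\to L^{0}\to L^{1}\to 0$, then produces a presentation $L_{1}\to L_{0}\to FY\to 0$, lifts it to a triangle via Palu's lemma, and extracts $Y$ as a summand of $L_{0}^{+}$ by the octahedral axiom. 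You instead build the lift explicitly as a cone over $\add T$, which makes the rigidity computation a clean pair of long exact sequences combined with $2$-CY duality (your identifications $\Ext^{1}_{\mathcal C}(\tilde L^{np},T^{i})\cong\Ext^{1}_{\Lambda}(L^{np},P^{i})$ and $\Hom_{\mathcal C}(T^{i},\tilde L^{np})\cong\Hom_{\Lambda}(P^{i},L^{np})$ do hold, though they deserve a line of justification, and note that you cannot literally ``choose'' the cone to avoid $\add ST$-summands --- you must strip them off afterwards, which is harmless). The real trade-off is in the last step: you replace the paper's hands-on maximality check by the counting criterion ``rigid with the maximal number of indecomposable summands implies cluster-tilting.'' For the cluster category $\mathcal{C}_H$ --- the case of the stated theorem --- this is classical (Buan--Marsh--Reineke--Reiten--Todorov), so your proof is complete there; but your attribution to Iyama--Yoshino is off, and in the paper's general $2$-CY setting this criterion needs Dehy--Keller's summand count together with the later Zhou--Zhu theorem that maximal rigid objects are cluster-tilting, a result not available when this paper was written --- which is precisely why the authors verify the defining property of a cluster-tilting object by hand. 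So: same skeleton (lift, rigidity, maximality), but your version buys a shorter rigidity argument at the price of outsourcing maximality to a nontrivial external theorem, while the paper's version is self-contained modulo Palu's lemma.
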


\subsection{Calabi-Yau triangulated categories of CY-dimension 2}
 Let
$k$ be an algebraically closed field and $\mathcal {C}$ be a
Krull-Schmidt triangulated $k$-linear category with split
idempotents and suspension functor $S$. We suppose that all
$\Hom$-space of $\mathcal {C}$ are finite-dimensional and that
$\mathcal {C}$ admits a Serre functor $\Sigma$. We suppose that
$\mathcal {C}$ is Calabi-Yau of CY-dimension 2, i.e. there is an
isomorphism of triangle functors
$$S^2\stackrel{\sim}\rightarrow \Sigma.$$For $X,Y\in \mathcal {C}$ and
$n\in \mathbb{Z}$, we put as usual $$\Ext_\mathcal
{C}^n(X,Y)=\Hom_\mathcal {C}(X,S^nY).$$Thus the Calabi-Yau property
can be writen as the following  bifunctorial isomorphisms
$$D\Ext^1(X,Y)\simeq \Ext^1(Y,X),\ \text{for any}\ X, Y.$$This
setting holds not only for cluster categories, for stable module
categories of preprojective algebras of Dynkin type, but also for
certain stable Frobenius subcategory of the category of all
finite-dimensional nilpotent representations of preprojective
algebras which are not Dynkin type \cite{GLS3},
 and also for stable categories of
Cohen-Macaulay modules over commutative complete local Gorenstein
isolated singularities of dimension 3 (see more in \cite{BIRS}).

Cluster-tilting objects in Calabi-Yau triangulated categories of
CY-dimension 2 (or 2-Calabi-Yau triangulated category for short) and
the corresponding endomorphism
 algebras were defined and studied
first in \cite{KR1}. In this note, we suppose that $\mathcal {C}$
always admits some cluster-tilting object $T$, which means
\begin{enumerate}
\item[-]$\Ext^1_\mathcal {C}(T,T)=0$ and
\item[-] for any $X\in \mathcal {C}$, if $\Hom_\mathcal
{C}(X,ST)=0,$ then $X$ belongs to $ \add T$, the full subcategory
formed by the direct factors of finite direct sums of copies of $T$
in $\mathcal{C}$.
\end{enumerate}
Note that because of the Calabi-Yau property, the second condition
is self dual. And by \cite{BMRRT}, this definition of
cluster-tilting objects and the definition in section \ref{section}
coincide for cluster categories. It is known from \cite{KR1} that
for each $X\in \mathcal {C}$, there exists a minimal right $\add T
$-approximation $T_0^X\xrightarrow{\delta} X$ and an induced
triangle
$$T_1^X\to T_0^X\xrightarrow{\delta} X\to ST_1^X,$$ where $T_0^X, T_1^X \in \add
T$. Sometimes, we call this triangle "minimal approximation
triangle" for convenience.

There is an essential result in \cite{KR1} describing the
relationship between 2-CY triangulated categories and the
corresponding endomorphism algebra of a cluster-tilting object,
which is the following.
\begin{theorem}Let $\mathcal {C}$ be a Calabi-Yau triangulated category of
CY-dimension 2 with a cluster-tilting object $T$ and let $\Gamma$ be
the endomorphism algebra of $T$. Then the functor $F=\Hom_\mathcal
{C}(T,-): \mathcal {C}\rightarrow \mod \Gamma$ induces an
equivalence $\mathcal {C}/\add ST\stackrel{\sim}\rightarrow
\mod\Gamma $. Moreover, the functor $F$ induces an equivalence from
$\add T$ to the category of projective modules in $\mod \Gamma$.
\end{theorem}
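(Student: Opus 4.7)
The plan is to establish the two equivalences in turn, starting with the restriction to $\add T$. Since $F(T)=\End_{\mathcal C}(T)=\Gamma$ as the regular right $\Gamma$-module, additivity sends $\add T$ into $\add\Gamma=\proj\Gamma$, and the natural map $\Hom_{\mathcal C}(T_1,T_2)\to\Hom_\Gamma(FT_1,FT_2)$ is an isomorphism when $T_1=T_2=T$ (both sides identify with $\Gamma$), hence on all of $\add T$ by additivity. Essential surjectivity onto $\proj\Gamma$ is immediate, since every finitely generated projective is a summand of some $F(T^n)=\Gamma^n$ and $F$ preserves summands. This handles the ``moreover'' clause.

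Next, $F(ST')=\Ext^1_{\mathcal C}(T,T')=0$ for $T'\in\add T$, so $F$ factors through a well-defined functor $\bar F\colon\mathcal C/\add ST\to\mod\Gamma$. For any $X\in\mathcal C$, applying $F$ to the minimal approximation triangle $T_1^X\xrightarrow{\alpha_X}T_0^X\xrightarrow{\delta_X}X\to ST_1^X$ and using $\Ext^1(T,T_1^X)=0$ yields a projective presentation
\[
F(T_1^X)\xrightarrow{F\alpha_X}F(T_0^X)\xrightarrow{F\delta_X}F(X)\to 0
\]
in $\mod\Gamma$. Essential surjectivity of $\bar F$ then follows: given $M\in\mod\Gamma$, lift a projective presentation $P_1\to P_0\to M\to 0$ through the $\add T$-equivalence of the first paragraph, complete the lifted map to a triangle $T_1\to T_0\to X\to ST_1$, and apply $F$ to identify $F(X)\cong M$.

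The main technical point is that $\bar F$ is fully faithful. For faithfulness, let $f\colon X\to Y$ with $F(f)=0$. Applying $\Hom_{\mathcal C}(T_0^X,-)$ to the minimal approximation triangle of $Y$ and using $\Ext^1(T_0^X,T_1^Y)=0$ produces $g\colon T_0^X\to T_0^Y$ with $\delta_Y g=f\delta_X$. Since $F(f)=0$ we have $F(\delta_Y)F(g)=0$, so the image of $F(g)$ lies in $\Ker F(\delta_Y)=\Im F(\alpha_Y)$; projectivity of $F(T_0^X)$ yields $h\in\Hom_\Gamma(FT_0^X,FT_1^Y)$ with $F(\alpha_Y)h=F(g)$, and the $\add T$-equivalence lifts $h$ to some $h'\colon T_0^X\to T_1^Y$. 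Faithfulness on $\add T$ then forces $g=\alpha_Y h'$ in $\mathcal C$, so $f\delta_X=\delta_Y\alpha_Y h'=0$, and the triangle $T_0^X\to X\to ST_1^X$ exhibits $f$ as a factorization through $ST_1^X\in\add ST$. Fullness is parallel: a morphism $\phi\colon F(X)\to F(Y)$ lifts to a chain map between the projective presentations, transports to a commutative square in $\add T$ via the first-paragraph equivalence, and completes to a morphism of triangles yielding $\tilde f\colon X\to Y$; the identity $F(\tilde f)=\phi$ then follows from surjectivity of $F(\delta_X)$.

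The main obstacle is the faithfulness argument: it requires two successive lifts through the minimal approximation triangle of $Y$, the projectivity of $F(T_0^X)$, and a precise transfer of vanishing relations between $\mathcal C$ and $\mod\Gamma$ mediated by the $\add T$-equivalence of the first paragraph. Everything else is either a direct use of $\Ext^1(T,T)=0$ or a standard projective-presentation manipulation in $\mod\Gamma$.
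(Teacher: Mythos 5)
The paper does not prove this theorem itself: it is quoted verbatim from Keller--Reiten \cite{KR1}, so there is no in-paper argument to compare against. Your proof is correct and is essentially the standard one from \cite{KR1} (and from \cite{BMR} in the cluster-category case): projectivization gives the equivalence $\add T\simeq\proj\Gamma$; applying $F$ to the approximation triangles $T_1^X\to T_0^X\to X\to ST_1^X$ and using $\Ext^1_{\mathcal C}(T,T)=0$ yields functorial projective presentations, whence essential surjectivity; and the two-step lift through the approximation triangle of $Y$, combined with $\delta_Y\alpha_Y=0$ and the exact sequence $\Hom(ST_1^X,Y)\to\Hom(X,Y)\to\Hom(T_0^X,Y)$, gives exactly the statement that $F(f)=0$ forces $f$ to factor through $\add ST$, which is faithfulness of the induced functor on the quotient; fullness by lifting chain maps between presentations is likewise the standard step. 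The only ingredients you use beyond the axioms are the existence of minimal right $\add T$-approximation triangles and the Krull--Schmidt/split-idempotents hypotheses, both of which the paper explicitly assumes, so nothing is missing.
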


By \cite{RV},  $\mathcal {C}$ has Auslander-Reiten triangles and
$\tau_\mathcal {C}$ is induced by $\Sigma\circ S^{-1} \simeq S$.
Furthermore the following proposition was also proved in \cite{KR1}.
\begin{proposition}\label{proposition}$\mathcal {C}/\add ST\stackrel{\sim}\rightarrow \mod
\Gamma$ has Auslander-Reiten sequences, induced by  the
Auslander-Reiten triangles in $\mathcal {C}$.
\end{proposition}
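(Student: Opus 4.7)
The plan is to obtain AR sequences in $\mod\Gamma$ by applying the cohomological functor $F = \Hom_{\mathcal{C}}(T,-)$ to the AR triangles of $\mathcal{C}$ and then verifying the almost-split property using the equivalence $\mathcal{C}/\add ST \xrightarrow{\sim} \mod\Gamma$. Let $M \in \mod\Gamma$ be non-projective indecomposable; via the equivalence, $M = FX$ for an indecomposable $X \in \mathcal{C}$ with $X \notin \add T$ (non-projectivity) and $X \notin \add ST$ ($FX \neq 0$). Since $\Sigma \simeq S^2$ forces $\tau_{\mathcal{C}} \simeq S$, the AR triangle ending at $X$ reads
$$SX \xrightarrow{u} Y \xrightarrow{v} X \xrightarrow{w} S^2 X,$$
and by the Reiten--Van den Bergh construction, $w$ corresponds under the Serre duality isomorphism $\Hom_{\mathcal{C}}(X, S^2 X) \simeq D\End_{\mathcal{C}}(X)$ to the projection $\End_{\mathcal{C}}(X) \twoheadrightarrow \End_{\mathcal{C}}(X)/\rad \simeq k$.

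The first step is to verify that $F$ sends this triangle to a short exact sequence $0 \to F(SX) \to F(Y) \to F(X) \to 0$ in $\mod\Gamma$; by cohomologicality of $F$ this reduces to two vanishings of connecting maps. For the map $F(w): F(X) \to F(S^2 X)$: given $\alpha \in \Hom_{\mathcal{C}}(T, X)$, Serre duality identifies $w \circ \alpha \in \Hom_{\mathcal{C}}(T, S^2 X) \simeq D\Hom_{\mathcal{C}}(X, T)$ with the functional $\beta \mapsto [\alpha\beta] \in \End_{\mathcal{C}}(X)/\rad$; since $X$ is indecomposable and not in $\add T$, no $\alpha: T \to X$ can be a split epi, so every composite $\alpha\beta$ lies in $\rad\End_{\mathcal{C}}(X)$ and the functional vanishes. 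A symmetric computation, using $X \notin \add ST$ to rule out any map $ST \to X$ being a split epi, handles the preceding connecting map and gives injectivity of $F(u)$.

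It then remains to check that $F(v)$ is right almost split. Given a non-split-epi $g: N \to F(X)$ in $\mod\Gamma$, I lift it across the equivalence to $\tilde g: Z \to X$ in $\mathcal{C}$; since any additive functor preserves split epis, $\tilde g$ cannot be split epi in $\mathcal{C}$, so the AR property of $v$ produces $\tilde h: Z \to Y$ with $\tilde g = v \circ \tilde h$, and applying $F$ gives the desired factorization of $g$. A short argument exploiting the local structure of $\End_{\mathcal{C}}(X)$ together with the hypothesis $X \notin \add ST$ rules out $F(v)$ itself being split epi, so the sequence is non-split and hence the AR sequence ending at $M$. The main obstacle will be the two Serre-duality computations of the connecting maps; once these vanishings are in hand, everything else transfers formally from $\mathcal{C}$ to $\mod\Gamma$.
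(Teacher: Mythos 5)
The paper offers no proof of this proposition at all---it simply cites Keller--Reiten \cite{KR1}---and your argument is the standard one from that source: apply $F=\Hom_{\mathcal C}(T,-)$ to the AR triangle ending at $X$, use the Serre-duality description of the connecting morphism together with $X\notin\add T$ and $X\notin\add ST$ (so the relevant composites land in $\rad\End_{\mathcal C}(X)$) to kill the two connecting maps, and transfer the almost-split property through the full, dense quotient functor. The only point you leave implicit is that the left-hand term $F(SX)$ is indecomposable --- its endomorphism ring is a nonzero quotient of the local ring $\End_{\mathcal C}(SX)$, nonzero because $SX\notin\add ST$ --- which is what upgrades ``non-split, right almost split onto an indecomposable'' to ``almost split sequence''.
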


Moreover, it is showed in \cite{DK} that all cluster-tilting objects
have the same number of pairwise non-isomorphic indecomposable
direct summands.
\section{Proof of the main result}
We need some preparation to prove the theorem. First, we have the
following well-known Auslander-Reiten formula in module theory.
\begin{lemma}\label{lemma}Let $A$ be an algebra and $M$ be an
$A$-module. If $\pd_A M\leqslant 1,$ then $\Ext_A^1(M,N)\simeq
D\Hom_A(N,\tau M)$ for each $A$-module $N$, where $\tau$ is the
Auslander-Reiten translation and $D=\Hom_k(-,k)$ is the usual
duality.
\end{lemma}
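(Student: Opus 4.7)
The plan is to derive the identity from the standard construction of the Auslander--Reiten translate via the transpose, with the hypothesis $\pd_A M\leq 1$ entering precisely to guarantee that a minimal projective presentation of $M$ is in fact a projective resolution.

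First I would choose a minimal projective resolution
\[ 0\to P_1\to P_0\to M\to 0, \]
and apply $(-)^{*}=\Hom_A(-,A)$ to it. By definition of the transpose, this yields an exact sequence of left $A$-modules whose right end reads
\[ P_0^{*}\to P_1^{*}\to \operatorname{Tr}\nolimits M\to 0, \]
and the Auslander--Reiten translate is $\tau M = D\operatorname{Tr}\nolimits M$.

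Next I would compute $\Ext^1_A(M,N)$ as the cokernel of the induced map $\Hom_A(P_0,N)\to\Hom_A(P_1,N)$, a description made legitimate precisely by the hypothesis $\pd_A M\leq 1$. Via the natural isomorphism $\Hom_A(P,N)\cong N\otimes_A P^{*}$ valid for any finitely generated projective right $A$-module $P$, this cokernel is identified with the cokernel of $N\otimes_A P_0^{*}\to N\otimes_A P_1^{*}$, which by right exactness of $N\otimes_A-$ equals $N\otimes_A \operatorname{Tr}\nolimits M$. Hence $\Ext^1_A(M,N)\cong N\otimes_A \operatorname{Tr}\nolimits M$, naturally in $N$.

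Finally, I would apply the tensor--hom adjunction together with the $k$-duality $D$ to obtain
\[ D\Ext^1_A(M,N)\cong D(N\otimes_A\operatorname{Tr}\nolimits M)\cong \Hom_A(N,D\operatorname{Tr}\nolimits M)=\Hom_A(N,\tau M), \]
and dualize once more, using that all spaces in sight are finite-dimensional, to conclude $\Ext^1_A(M,N)\cong D\Hom_A(N,\tau M)$. The argument is essentially routine definition-chasing, so there is no genuine obstacle; the only thing requiring real care is bookkeeping of left versus right module structures when moving between $P$, $P^{*}$, and $\operatorname{Tr}\nolimits M$ in the adjunction, and ensuring that the isomorphisms at each step are natural in $N$ so as to produce a functorial identification rather than merely a non-canonical one.
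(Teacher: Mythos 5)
Your argument is correct; note, however, that the paper offers no proof of this lemma at all (it is quoted as the ``well-known Auslander--Reiten formula''), so there is no internal proof to compare against. Your derivation via the transpose is the standard one and is complete. The two places where the hypotheses genuinely matter are both handled: the identification $\Ext^1_A(M,N)\cong\Coker\bigl(\Hom_A(P_0,N)\to\Hom_A(P_1,N)\bigr)$ requires that $0\to P_1\to P_0\to M\to 0$ be a full projective resolution, i.e.\ $\pd_AM\le 1$, and the identification of $D$ applied to the cokernel $\operatorname{Tr}\nolimits M$ of $P_0^*\to P_1^*$ with $\tau M$ requires the presentation to be minimal; you use the minimal resolution, so both are in order. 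Your computation also makes transparent why the projective-dimension hypothesis cannot be dropped: for general $M$ the formula only holds with $\Hom_A(N,\tau M)$ replaced by its quotient by the maps factoring through injectives (the stable $\Hom$), and the alternative route to the lemma is to quote that general formula and check that for $\pd_AM\le1$ no nonzero map lands in $\tau M$ through an injective; your direct tensor--transpose calculation sidesteps the stable category entirely, at the cost only of the left/right bookkeeping you flag, which indeed works out since $P^*$ is a left module, $N\otimes_A\operatorname{Tr}\nolimits M$ is a $k$-space, and $D(N\otimes_AX)\cong\Hom_A(N,DX)$ is the tensor--hom adjunction.
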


Then we need the following crucial proposition.
\begin{proposition}\label{crucial proposition}Let $\mathcal {C}$ be a 2-Calabi-Yau triangulated category
 with a cluster-tilting object $T$ and let $\Gamma$ be
the endomorphism algebra of $T$. Let $M, N$ be two objects in
$\mathcal {C}$ and $FM, FN\in \mod\Gamma$ be their images under the
functor $F=\Hom_\mathcal{C}(T,-):\mathcal{C}\to \mod\Gamma$. If $FM$
and $FN$ are of projective dimension at most one and satisfy
$\Ext_\Gamma^1(FM,FN)=0$ and $\Ext_\Gamma^1(FN,FM)=0$, then
$\Ext_\mathcal {C}^1(M,N)=0$ and $\Ext_\mathcal {C}^1(N,M)=0$.
\end{proposition}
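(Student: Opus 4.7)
The plan is to prove both vanishings at once via Serre duality. Since $\mathcal{C}$ is 2-Calabi--Yau with Serre functor $\Sigma = S^{2}$, there is a nondegenerate bilinear pairing
\[
   \Hom_\mathcal{C}(M, SN) \times \Hom_\mathcal{C}(N, SM) \longrightarrow k,\qquad (f, g) \longmapsto \operatorname{Tr}(Sg \circ f),
\]
and it will suffice to show this pairing is identically zero. The strategy is to use the $\Gamma$-Ext hypotheses to force every element of each $\Hom$ space to factor through $\add ST$, and then to exploit $\Ext^{1}_\mathcal{C}(T,T) = 0$ to kill the resulting composition.

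First I would apply the Auslander--Reiten formula (Lemma \ref{lemma}) in $\mod\Gamma$: since $\pd_\Gamma FM \leq 1$, the vanishing of $\Ext^{1}_\Gamma(FM, FN)$ gives $\Hom_\Gamma(FN, \tau_\Gamma FM) = 0$, and symmetrically $\Hom_\Gamma(FM, \tau_\Gamma FN) = 0$. In a 2-Calabi--Yau category one has $\tau_\mathcal{C} = \Sigma \circ S^{-1} = S$, and so by Proposition \ref{proposition} the functor $F$ intertwines AR translations in the sense that $\tau_\Gamma F \simeq F S$ on objects with no summand in $\add ST$. Under this identification the vanishings become $\Hom_\Gamma(FN, F(SM)) = 0$ and $\Hom_\Gamma(FM, F(SN)) = 0$. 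Transporting across the Keller--Reiten equivalence $\mathcal{C}/\add ST \xrightarrow{\sim} \mod\Gamma$, these assertions mean that every $\mathcal{C}$-morphism $M \to SN$ (and every $N \to SM$) factors through some object of $\add ST$.

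Given such factorizations $f = f_2 f_1 \colon M \to ST' \to SN$ and $g = g_2 g_1 \colon N \to ST'' \to SM$, the Serre pairing evaluates as
\[
   \operatorname{Tr}(Sg \circ f) = \operatorname{Tr}\bigl((Sg_2) \circ (Sg_1 \circ f_2) \circ f_1\bigr),
\]
and the middle factor $Sg_1 \circ f_2$ lies in $\Hom_\mathcal{C}(ST', S^{2} T'') \cong \Ext^{1}_\mathcal{C}(T', T'') = 0$ by the cluster-tilting hypothesis on $T$. Hence $Sg \circ f = 0$ for every such pair, the pairing vanishes identically, and by its nondegeneracy both $\Ext^{1}_\mathcal{C}(M, N) = \Hom_\mathcal{C}(M, SN)$ and $\Ext^{1}_\mathcal{C}(N, M) = \Hom_\mathcal{C}(N, SM)$ are zero.

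The main technical obstacle I foresee is the identification $\tau_\Gamma F X \cong F(SX)$ used in the second step: Proposition \ref{proposition} supplies it cleanly only when $X$ has no summand in $\add ST$, since such a summand is killed by $F$ but not by $F \circ S$. The argument therefore implicitly assumes $M$ and $N$ have no $\add ST$ summands---harmless for the paper's intended application to lifts of tilting modules---or else requires a short separate argument for these summands.
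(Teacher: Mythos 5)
Your proof is correct and its first half is exactly the paper's: the Auslander--Reiten formula plus Proposition \ref{proposition} shows that every morphism $M\to SN$ (and, by the symmetric hypothesis, every $N\to SM$) factors through $\add ST$. From there the two arguments genuinely diverge. The paper fixes a single $\alpha\colon M\to SN$ factoring through $\add ST$ and kills it by a diagram chase: the minimal $\add T$-approximation triangle of $M$ together with $\pd_\Gamma FM\le 1$ gives $FS^-\alpha=0$, then $\Ext^1_\Gamma(FM,FN)=0$ splits the induced sequence $0\to FN\to FE\to FM\to 0$, which lets one write $\alpha=\alpha\cdot f$ with $f$ factoring through $\add ST$ and conclude $\alpha=0$. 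You instead feed both factorizations into the nondegenerate Serre pairing $\Hom_\mathcal{C}(M,SN)\times\Hom_\mathcal{C}(N,SM)\to k$ and observe that the middle composition lies in $\Hom_\mathcal{C}(ST',S^2T'')\cong\Ext^1_\mathcal{C}(T',T'')=0$ by rigidity of $T$; this kills the whole pairing at once, is noticeably shorter, and is more symmetric, since the two $\Ext_\Gamma$-vanishing hypotheses are used in the same way (the paper uses one for the factorization and the other for the splitting). Your closing caveat is also well taken: as literally stated the proposition fails when $N$ has a summand in $\add ST$ (take $M=T$, $N=ST$; then $FN=0$, all module-theoretic hypotheses hold vacuously, yet $\Ext^1_\mathcal{C}(M,N)\cong D\Gamma\neq 0$), and the paper's own proof tacitly relies on $M,N$ being the canonical preimages without $\add ST$ summands, which is all that Theorem \ref{theorem} requires.
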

\begin{proof}We only need to show that the result holds  for $M,N$
indecomposable. Since  $FN$ is of projective dimension at most 1,
using Auslander-Reiten formula (Lemma \ref{lemma}) and Proposition
\ref{proposition},  we have
$$0=D\Ext_\Gamma^1(FN,FM)\simeq\Hom_\Gamma(FM,\tau_\Gamma FN)$$
$$\simeq \frac{\Hom_\mathcal {C}(M,SN)}{\{f:M\rightarrow SN\
\text{factoring through} \add ST\}}.$$Therefore, any map in
$\Hom_\mathcal {C}(M,SN)$ factors through $\add ST$. So it suffices
to prove that any $\alpha: M\rightarrow SN$ which factors through
$\add ST$ equals zero.

Let
\[N\xrightarrow{\gamma} E\xrightarrow{\beta}M\xrightarrow{\alpha}SN
\]be the induced triangle, and let \[T_1^M\to T_0^M\xrightarrow{p_0^M} M\xrightarrow{b} ST_1^M\]
be the minimal approximation triangle of $M$. Since $\alpha: M\to
SN$ factors through $\add ST$, the composition $\alpha \cdot p^M_0$
is zero. Then $\alpha$ factors through $b$. That is, there exists
$\xi: T_1^M\to N$ such that $\alpha= S\xi\cdot b$.
\[\xymatrix{T_1^M\ar[r] &T_0^M\ar[r]^{p_0^M} & M\ar[r]^b
\ar[d]_\alpha & ST_1^M \ar@{.>}[ld]^{S\xi}\\
& & SN  }\]

Applying the functor $F=\Hom_\mathcal{C}(T, -)$ to the triangle
\[S^-M\xrightarrow{-S^-b}T_1^M\xrightarrow{} T_0^M\xrightarrow{p_0^M} M,\]
 we get the exact sequence in $\mod \Gamma$
\[FS^-M\xrightarrow{-FS^-b} FT_1^M\to FT_0^M\to FM\to 0.\]
Note that $F$ induces an equivalence between $\add T$ and the
category of projectives in $\mod \Gamma$, we have $FS^-b= 0$ in
$\mod \Gamma$ for the reason that we are considering the minimal
$\add T$-approximation triangle and $FM$ is of projective dimension
at most 1.  In particular, for any morphism $T\to S^-M$, the
composition
\[T\to S^-M\xrightarrow{S^-b}T_1^M\xrightarrow{\xi}N
\]
is zero, {\em i.e.} $FS^-\alpha=0$.  Thus, applying the functor $F$
to the triangle
\[N\xrightarrow{\gamma} E\xrightarrow{\beta}M\xrightarrow{\alpha}SN,
\]we get an exact sequence
\[0\to FN\to FE\xrightarrow{F\beta} FM\to 0,\]
which splits since $\Ext_\Gamma^1(FM, FN)=0$. This means $F\beta$ is
an split epimorphism,  by the quotient property, there exists $\rho:
M\to E$ in $\mathcal{C}$
\[\xymatrix{
& & M\ar[d]^{1_M} \ar@{.>}[dl]_\rho\\
N\ar[r]& E\ar[r]^\beta & M\ar[r]^\alpha & SN }
\]such that $1_M= \beta\cdot \rho + f$ in $\mathcal{C}$ for some $f: M\to
M$ which factors through $\add ST$. Thus $\alpha=
\alpha\cdot\beta\cdot\rho+ \alpha \cdot f= \alpha\cdot f$. Note that
we have $FS^-\alpha=0$, which implies that $0=\Hom_\mathcal{ C}(T,
S^-\alpha)\simeq \Hom_\mathcal{C}(ST, \alpha)$. That is, the
composition $ST\to M\xrightarrow{\alpha} N[1]$ equals $0$. Thus we
have $\alpha=\alpha\cdot f=0$, which implies
$\Hom_\mathcal{C}(M,SN)=0$, that is $ \Ext_\mathcal{C}^1(M,N)=0$.
Thanks to the 2-CY property, we get $\Ext_\mathcal {C}^1(N,M)=0$,
too.
\end{proof}

We can prove our main result now.
\begin{theorem}\label{theorem}Let $\mathcal {C}$ be a Calabi-Yau triangulated category of
CY-dimension 2 with a cluster-tilting object $T$ and let $\Gamma$ be
the endomorphism algebra of $T$. Let $L$ be a tilting module over
$\Gamma$, then  $L$ lifts to a cluster-tilting object in $\mathcal
{C}$.
\end{theorem}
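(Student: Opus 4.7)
The plan is to lift $L$ to an object $M \in \mathcal{C}$ via the Keller-Reiten equivalence, show that $M$ is rigid using Proposition \ref{crucial proposition}, and then use a counting argument to upgrade rigidity to cluster-tilting.

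For the lift, write $L = \bigoplus_i L_i^{a_i}$ as a direct sum of pairwise non-isomorphic indecomposables. Since $F: \mathcal{C}/\add ST \to \mod\Gamma$ is an equivalence, for each $L_i$ there is an indecomposable $M_i \in \mathcal{C}$, not in $\add ST$, with $FM_i = L_i$; if $L_i$ is the projective $FT_j$ for some indecomposable summand $T_j$ of $T$, we take $M_i = T_j$. Setting $M = \bigoplus_i M_i^{a_i}$ we have $FM = L$, and the indecomposable summands of $M$ are pairwise non-isomorphic and in bijection with those of $L$. Now $L$ is a tilting $\Gamma$-module, so $\pd_\Gamma L \leq 1$ and $\Ext^1_\Gamma(L,L)=0$; applying Proposition \ref{crucial proposition} with both arguments equal to $M$ yields $\Ext^1_\mathcal{C}(M,M)=0$, i.e.\ $M$ is rigid in $\mathcal{C}$.

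It remains to promote rigidity to the cluster-tilting property. By the Bongartz characterization recalled in Section 2, the number of pairwise non-isomorphic indecomposable summands of $L$ equals the number of simple $\Gamma$-modules. Since $F$ restricts to an equivalence between $\add T$ and the projective $\Gamma$-modules, this number equals the number $n$ of pairwise non-isomorphic indecomposable summands of $T$. By the bijection in the previous paragraph, $M$ has exactly $n$ pairwise non-isomorphic indecomposable summands. Now, any rigid object in $\mathcal{C}$ can be completed to a cluster-tilting object by adjoining further indecomposable summands; by the Dehy-Keller invariance cited at the end of Section 2, such a completion has exactly $n$ non-isomorphic indecomposable summands, so no new summands are needed and $M$ itself is already cluster-tilting.

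The main obstacle is the final step: deducing the cluster-tilting property from rigidity together with the correct count. Rigidity alone is strictly weaker, so one really needs to invoke a completion result (available from the Iyama-Yoshino mutation machinery, or equivalently the fact that in a 2-CY triangulated category with a cluster-tilting object a basic rigid object with the maximal possible number of indecomposable summands is automatically cluster-tilting) and combine it with the Dehy-Keller invariance. Everything else in the argument is essentially bookkeeping built on top of Proposition \ref{crucial proposition}.
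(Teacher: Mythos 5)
Your first two steps (lifting $L$ to an object $L^{+}$ with $FL^{+}=L$ and deducing $\Ext^1_{\mathcal C}(L^{+},L^{+})=0$ from Proposition \ref{crucial proposition}) coincide with the paper's. The divergence, and the problem, is the final step. You reduce everything to the claim that a basic rigid object with the maximal possible number of indecomposable summands is automatically cluster-tilting, equivalently that every rigid object extends to a cluster-tilting one, and you declare this ``available from the Iyama--Yoshino mutation machinery.'' That is precisely the hard point, and it is not delivered by anything the paper has at its disposal. Dehy--Keller gives the invariance of the number of summands of cluster-tilting objects (and a bound on the number of summands of a rigid object), but not the statement that maximal rigid implies cluster-tilting; the Iyama--Yoshino subfactor construction lets you compare cluster-tilting objects \emph{containing} a given rigid object, but to produce one you already need the existence you are trying to establish. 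The statement you need is true --- it was proved later by Zhou and Zhu for 2-CY categories admitting a cluster-tilting object --- but its proof is an argument of at least the same difficulty as the theorem at hand, so as written your proof assumes the essential content.

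For comparison, the paper closes the gap directly: it takes an indecomposable $Y$ with $\Hom_{\mathcal C}(L^{+},SY)=0$ and shows $Y\in\add L^{+}$ by hand. First $Y\notin\add ST$, because otherwise $\Hom_\Gamma(FT_1,L)=0$ for some indecomposable projective $FT_1$, contradicting the existence of the tilting exact sequence $0\to\Gamma\to L^0\to L^1\to 0$. Then $\Ext^1_\Gamma(FL^{+},FY)=0$ yields a presentation $L_1\to L_0\to FY\to 0$ with $L_i\in\add L$, which is lifted to a triangle in $\mathcal C$ via Palu's lemma; an application of the octahedral axiom and a comparison of the resulting exact sequences in $\mod\Gamma$ exhibit $Y$ as a direct summand of $L_0^{+}\oplus ST_K$, hence of $L_0^{+}$ since $Y\notin\add ST$. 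If you want to salvage your counting strategy, you must either supply a proof of the ``maximal rigid $\Rightarrow$ cluster-tilting'' step or replace it by a direct verification of this kind.
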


\begin{proof}Note that $F=\Hom_\cc(T,-): \mathcal {C}\to \mod \Gamma$ is
 dense, so the preimage of $L$, denoted by $L^+$, has no self-extension by
Proposition \ref{crucial proposition}. Suppose that $Y$ is an
indecomposable object of $\cc$ such that
\[
\Hom_\cc(L^+, SY)=0.
\]
We need to show that $Y$ is a direct factor of $L^+$.

First,  we claim that $Y$ can not be a direct factor of $ST$. If
not, suppose that $Y\simeq ST_1$ for some $T_1\in \add T$. We have
\[0=D\Hom_\cc(L^+, SY)\simeq D\Hom_\cc(L^+, S^2T_1)\simeq \Hom_\cc(T_1, L^+),
\]
which implies
\[\Hom_{\Gamma}(FT_1, L)=0.
\]
 Since $L$ is a tilting module, we have
an exact sequence
\[0\to \Gamma\to L^0\to L^1\to 0
\]
where $L^0$ and $L^1$ are direct factors of finite direct sums of
$L$. Note that $FT_1$ is a projective $\Gamma$-module. But applying
the functor $\Hom_{\Gamma}(FT_1, -)$ to the short exact sequence
above, one gets
\[\Hom_{\Gamma}(FT_1, \Gamma)=0,
\]
a contradiction.

By the Auslander-Reiten formula (Lemma \ref{lemma}), it is not hard
to check that $\Ext_\Gamma^1(FL^+, FY)=0$. Since $FL^+$ is a tilting
module, this implies that there exists a short exact sequence
\[
 L_1 \xrightarrow{f} L_0 \xrightarrow{g} FY \to 0,
\]
with $L_1,L_0\in \add L$. Let $K$ be the image of $f$, and rewrite
$f: L_1\to L_0$ as the composition
$L_1\xrightarrow{f}K\xrightarrow{i}L_0$. Denoted by $L_1^+, L_0^+$
and $ K^+$  the objects in $\cc$ corresponding to $L_1, L_0$ and $
K$ respectively. By \cite[Lemma 8]{Palu}, we have  the following
triangle
\[K^+\xrightarrow{(i^+,\ r^+)}L_0^+\oplus ST_K\xrightarrow{g^+}
Y\to SK^+
\]
such that its image under the functor $F$ is
\[0\to K\xrightarrow{i}L_0\xrightarrow{g}FY\to 0.
\]

 Let $L_1^+\xrightarrow{f^+}K^+$ be the morphism in $\cc$ corresponding to
 $L_1\xrightarrow{f}
K$ in $\mod \Gamma$. By the octahedral axiom, we have the following
commutative diagram.
\[\xymatrix{L_1^+\ar@{=}[r]\ar[d]^-{f^+}&L_1^+\ar[d]^-{(i^+f^+,\ r^+f^+)}\\
K^+\ar[r]^-{(i^+,\ r^+)}\ar[d]&L_0^+\oplus ST_K\ar[r]^-{g^+}\ar[d]^{\alpha}& Y\ar@{=}[d]\\
X\ar[r]&M\ar[r]^{\beta}&Y}
\]
Applying $F$ to the triangle in the second column, we get an exact
sequence in $\mod\Gamma$
\[L_1\xrightarrow{f}L_0\xrightarrow{F\alpha}FM\to F(SL_1^+).
\]
Thus we have the following commutative diagram whose rows are exact
sequences in $\mod\Gamma$
\[\xymatrix{L_1\ar[r]^f\ar@{=}[d]&L_0\ar[r]^{g}\ar@{=}[d]& FY\ar[r]\ar@{.>}[d]^{ \gamma}&0\\
L_1\ar[r]^f\ar@{=}[d]& L_0\ar[r]^{F\alpha}\ar@{=}[d]&FM\ar[r]\ar[d]^{F\beta}& FSL_1^+\\
L_1\ar[r]^f&L_0\ar[r]^{g}& FY\ar[r]&0 }
\]
which implies that $\gamma \cdot F\beta\simeq1_{FY}$. In particular,
we have $M\simeq Y\oplus Z$ in $\cc$ for some $Z$, since we have
proved that $Y\not \simeq ST'$ for any $T'\in \add T$. Thus we
rewrite the triangle as
\[L_1^+\xrightarrow{(i^+f^+,\ r^+f^+)} L_0^+\oplus
ST_K\xrightarrow{\alpha} Y\oplus Z\xrightarrow{(0, \eta)^t}SL_1^+,
\]
since $\Hom_\cc(Y, SL^+)=0$. Consider the following commutative
diagram whose rows are triangles in $\cc$.
\[\xymatrix{0\ar[r]\ar[d]&Y\ar@{=}[r]\ar@{.>}[d] & Y\ar[r]\ar[d]^{(1,0)} & 0\ar[d]\\
L_1^+\ar[r]\ar[d]& L_0^+\oplus ST_K\ar[r]\ar@{.>}[d]& Y\oplus
Z\ar[r]^-{(0, \eta)^t}\ar[d]^{(1,0)^t}&
SL_1^+\ar[d]\\
0\ar[r]&Y\ar@{=}[r]&Y\ar[r]& 0}
\]
It yields that $Y$ is a direct factor of $L_0^+\oplus ST_K$,  which
implies that $Y$ is a direct factor of $L_0^+$, since that $Y$ is
not a direct factor of $ST$. This means that $L^+$ is a
cluster-tilting object in $\cc$.
\end{proof}

By the definition of quotient category, we can use the same notation
for a $\Gamma$-module and its preimage in $\mathcal{C}$ under the
projection $\mathcal{C}\to \mathcal{C}/\add ST
\xrightarrow{\sim}\mod\Gamma$.
 Thus we have the following corollary as for the cluster
categories \cite[Corollary 2.4]{S}.
\begin{corollary}Let $\mathcal {C}$ be a Calabi-Yau category of
CY-dimension 2 with a cluster-tilting object $T$ and let $\Gamma$ be
the endomorphism algebra of $T$ in $\mathcal{C}$. If $L$ is a
tilting $\Gamma$-module, then the endomorphism algebra
$\End_\Gamma(L)^{op}$ is a quotient of $\End_\mathcal{C}(L)^{op}$,
the endomorphism algebra of $L$ in $\mathcal{C}$.
\end{corollary}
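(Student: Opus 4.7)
The plan is to exploit the Keller--Reiten equivalence $\mathcal{C}/\add ST \xrightarrow{\sim} \mod \Gamma$, or rather the fact that it fits into a factorization $F : \mathcal{C} \xrightarrow{\pi} \mathcal{C}/\add ST \xrightarrow{\sim} \mod \Gamma$, where $\pi$ is the canonical projection. Since $\pi$ is full (this is true for every additive quotient) and the second arrow is an equivalence, the functor $F = \Hom_\mathcal{C}(T,-)$ is itself full. Concretely, for any two objects $M,N$ of $\mathcal{C}$, the induced $k$-linear map
$$F_{M,N} : \Hom_\mathcal{C}(M,N) \longrightarrow \Hom_\Gamma(FM,FN)$$
is surjective, with kernel equal to the subspace of morphisms that factor through some object of $\add ST$.

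First I would apply Theorem \ref{theorem} to the tilting module $L$ to lift it to a cluster-tilting object of $\mathcal{C}$, which by the convention introduced just before the statement will again be denoted $L$; this legitimizes the symbol $\End_\mathcal{C}(L)$. Next I would specialize the fullness observation above to $M = N = L$, obtaining a surjective $k$-linear map
$$F_{L,L} : \End_\mathcal{C}(L) \twoheadrightarrow \End_\Gamma(FL) = \End_\Gamma(L).$$
Because $F$ is a (covariant) functor it preserves composition and identities, so $F_{L,L}$ is in fact a surjective homomorphism of $k$-algebras. Passing to opposite algebras is an exact functor on the category of $k$-algebras and therefore preserves surjections, yielding the desired quotient map $\End_\mathcal{C}(L)^{op} \twoheadrightarrow \End_\Gamma(L)^{op}$.

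There is essentially no obstacle here: the corollary is an immediate consequence of Theorem \ref{theorem} (guaranteeing that $L$ makes sense as an object of $\mathcal{C}$) together with the fullness of $F$ that is built into the Keller--Reiten equivalence. If desired, one can also identify the kernel explicitly as the two-sided ideal of $\End_\mathcal{C}(L)$ consisting of endomorphisms of $L$ that factor through $\add ST$, mirroring the analogous description for cluster categories in \cite{S}.
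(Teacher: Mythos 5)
Your proof is correct and follows the same route as the paper: lift $L$ to a cluster-tilting object via Theorem \ref{theorem}, then use the fullness of $F=\Hom_\mathcal{C}(T,-)$ coming from the equivalence $\mathcal{C}/\add ST\xrightarrow{\sim}\mod\Gamma$ to obtain the surjection $\End_\mathcal{C}(L)^{op}\twoheadrightarrow\End_\Gamma(L)^{op}$. You merely spell out the details (surjectivity of $F_{L,L}$, compatibility with composition, and the identification of the kernel as the ideal of endomorphisms factoring through $\add ST$) that the paper leaves implicit.
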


\begin{proof}It is known that $L$ is actually a cluster-tilting object in
$\mathcal{C}$ by the Theorem \ref{theorem}. Thus the result is
deduced by using the equivalence $\mathcal{C}/\add ST
\xrightarrow{\sim} \mod\Gamma$.
\end{proof}

\def\cprime{$'$}
\providecommand{\bysame}{\leavevmode\hbox
to3em{\hrulefill}\thinspace}
\providecommand{\MR}{\relax\ifhmode\unskip\space\fi MR }
\providecommand{\MRhref}[2]{%
  \href{http://www.ams.org/mathscinet-getitem?mr=#1}{#2}
} \providecommand{\href}[2]{#2}

\end{document}